\newtheorem{thm}{THEOREM}[section]
\newtheorem{lem}[thm]{LEMMA}
\newtheorem{cor}[thm]{COROLLARY}
\newtheorem{mydef}[thm]{DEFINITION}
\newtheorem{algorithm}{ALGORITHM}
\newtheorem{assumption}{ASSUMPTION}
\title{Fast integrators for dynamical systems with several temporal scales}
\author[1]{Yoonsang Lee\thanks{ylee@cims.nyu.edu}}
\author[2]{Bjorn Engquist\thanks{engquist@math.utexas.edu}}
\affil[1]{Courant Institute of Mathematical Sciences, New York University}
\affil[2]{Department of Mathematics and ICES, University of Texas at Austin}
\date{}
\begin{document}
\maketitle

\begin{abstract}
We propose a fast integrator to a class of dynamical systems with several temporal scales. The proposed method is developed as an extension of the variable step size Heterogeneous Multiscale Method (VSHMM) \cite{VSHMM}, which is a two-scale integrator developed by the authors. While iterated applications of multiscale integrators for two different scales increase the computational complexity exponentially as the number of different scales increases, the proposed method, on the other hand, has computational complexity linearly proportional to the number of different scales. This efficiency is achieved by solving different scale components of the vector fields with variable time steps. It is shown that variable time stepping of different force components has an effect of fast integration for the effective force of the slow dynamics. The proposed fast integrator is numerically tested on problems with several different scales which are dissipative and highly oscillatory including multiscale partial differential equations with sparsity in the solution space.
\end{abstract}

\section{Introduction}
Many problems in science and engineering have a wide range of temporal scales. From 
femto seconds of molecular dynamics to years of atmosphere, different range of scales interact with one another resulting in non-trivial dynamics on each scale. 
But direct modeling or numerical computation of the full scale is computationally prohibitive. For example, if the slow dynamics have a time scale of order $\mathcal{O}(1)$ while the fast dynamics have a time scale of order $\mathcal{O}(\frac{1}{\epsilon})$, it is required to use a time step of order $\mathcal{O}(\epsilon)$ for a simulation of up to a time $\mathcal{O}(1)$, which is computationally expensive for $\epsilon\ll1$.

In numerical analysis, this kind of modeling has a form of stiff problems which require an extremely small time step for numerical stability. For general stiff problems, there is a class of fast integrators such as exponential integrators or Gautschi type methods \cite{EXPINT,GAUTSCHI}. These methods use an analytic formula for the stiff part resulting in significantly less restriction on the time steps from stability and accuracy. However their applications are restricted to a class of problems in which the analytic solution operators of the significant parts are known a priori.

Instead of approximating all the scales, several multiscale methods have been developed and studied which approximate only slowly varying dynamics for computational efficiency. The equation-free method \cite{EQfree} solves the full fine scales for short times to capture the small scale behaviors and uses this information to approximate the essential coarse scale behaviors. The heterogeneous multiscale methods \cite{HMM} is a general framework for multiscale methods which incorporate small scale simulations on local domains in space and time to approximate the coarse scale solutions. One common characteristic of these methods is that they focus on approximating the slow variables by resolving the ergodic behaviors or measures of the fast variables using a fine time step locally, which yields computational savings of the methods. 
By imposing ergodicity on the fast dynamics conditional to the slow dynamics, averaging equations \cite{AVG1,AVG2} serve as an approximation of the slow dynamics. 
Thus, numerically resolving the ergodic measure of the fast variables is an important ingredient to develop robust and fast multiscale methods. For example, using a kernel with some special properties such as smooth regularity and particular decaying rates, a fast convergence of the invariant measure can be achieved \cite{HMMODE,BFHMM}. 

In this paper we focus on a more general class of problems than two-scale problems and propose a new multiscale integrator which captures the hidden slow variables of the following form of problems which have several temporal scales:
\begin{equation}\label{eq:VSSHMM:model2}
\frac{dx}{dt}=f_0(x)+\sum_{k=1}^K\frac{f_k(x)}{\epsilon_k},\quad x(0)=x_0,\quad 0<\epsilon_K, \epsilon_{k+1}\leq\epsilon_{k}, \epsilon_1\ll 1.
\end{equation}
The slow variables and fast variables are defined as follows
\begin{mydef}\label{def:slowvariable}
a function $\xi:x\in D\to\mathbb{R}$ is a slow variable of the system \eqref{eq:VSSHMM:model2} if it has a bounded derivative for $0<\epsilon_1<\epsilon_0$ for a positive constant $\epsilon_0\ll 1$ along the flow $x(t)$ in $D$, that is,
\begin{equation}
\sup_{x\in D,\epsilon_1\in (0,\epsilon_0)}|\nabla \xi\cdot \frac{dx}{dt}|\leq C
\end{equation} 
where $D\subset\mathbb{R}^n$ is an open connected set  and $C$ is a constant independent of $\epsilon$. If $C$ is dependent on $\epsilon_1$ and unbounded as $\epsilon_1\to 0$, $\xi$ is a fast variable of the system.
\end{mydef}
There are methods which require the identification of the hidden slow variables in advance \cite{AKST,ALT} but our method does not require the identification of the slow variables yet automatically approximates them; see \cite{Resonance,MSHMM,FLAVORS,BFHMM} for two-scale integrators which do not require a priori identification of the slow variables to approximate them.

Two-scale integrators can be applied to the several time scale problems through iterated applications of the methods \cite{3scHMM}. That is, a two-scale integrator is employed to resolve the slow variables while the ergodic measure of the intermediate scale variables is resolved by another application of the two-scale method (figure \ref{fig:VSSHMM:iterated} shows a schematic of iterated application of two-scale HMM integrator).  Under the existence of a deterministic averaging equation (which is of our interest in this paper), this iterated approach shows satisfactory slow variable approximation skills. But the computational cost of this approach increases exponentially as the number of separated scales increases. 
The method we propose here, on the other hand, solves each intermediate and stiff vector filed components for the same number of times with different time steps according to their scales and thus the new method has computational complexity increasing linearly depending on the number of different scales (see figure \ref{fig:VSSHMM:vsshmm} for a schematic of the proposed method).
\begin{figure}
\centering
\subfloat[Iterated application of two-scale integrator HMM\label{fig:VSSHMM:iterated}]{\includegraphics[width=.45\textwidth]{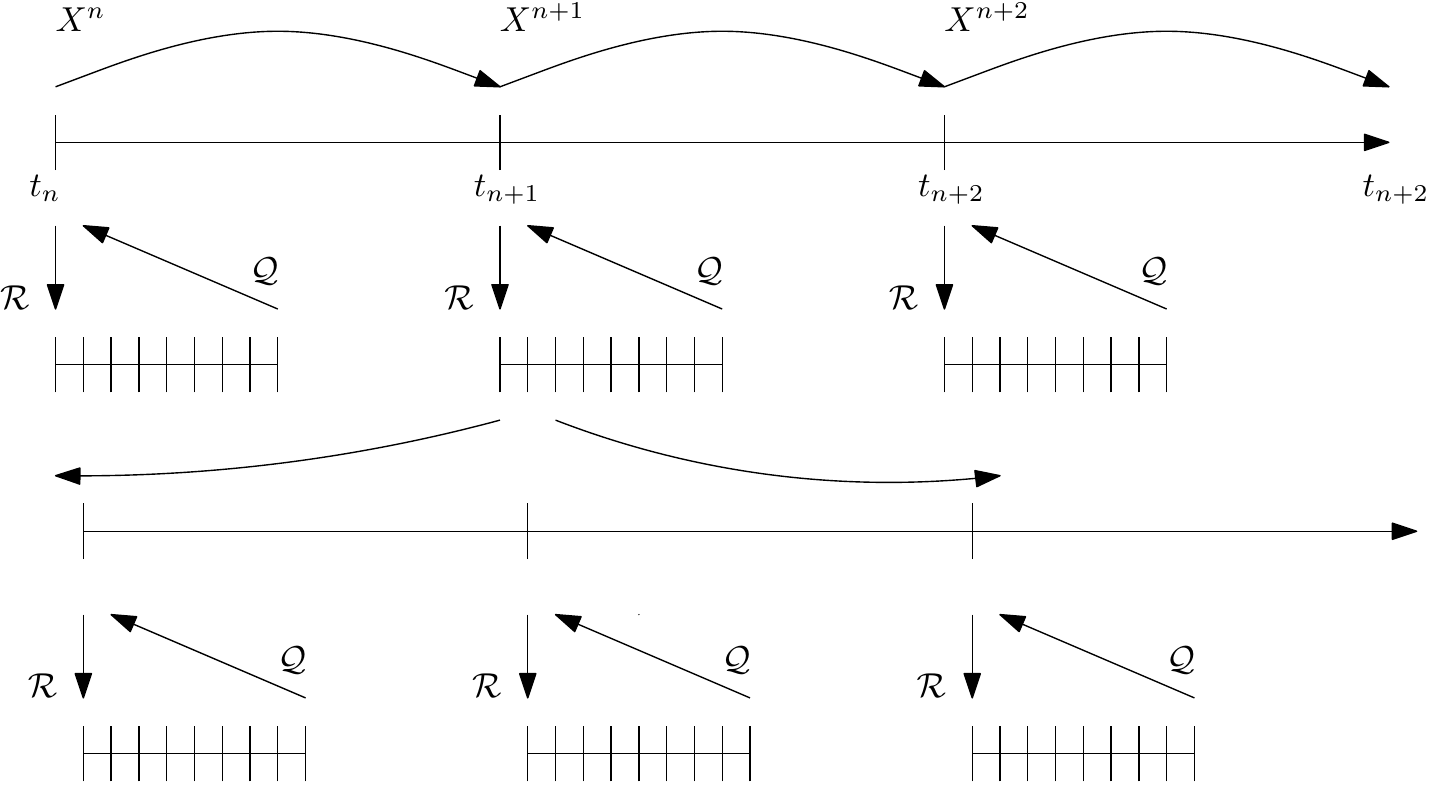}}
\subfloat[VSHMM\label{fig:VSSHMM:vsshmm}]{\includegraphics[width=.45\textwidth]{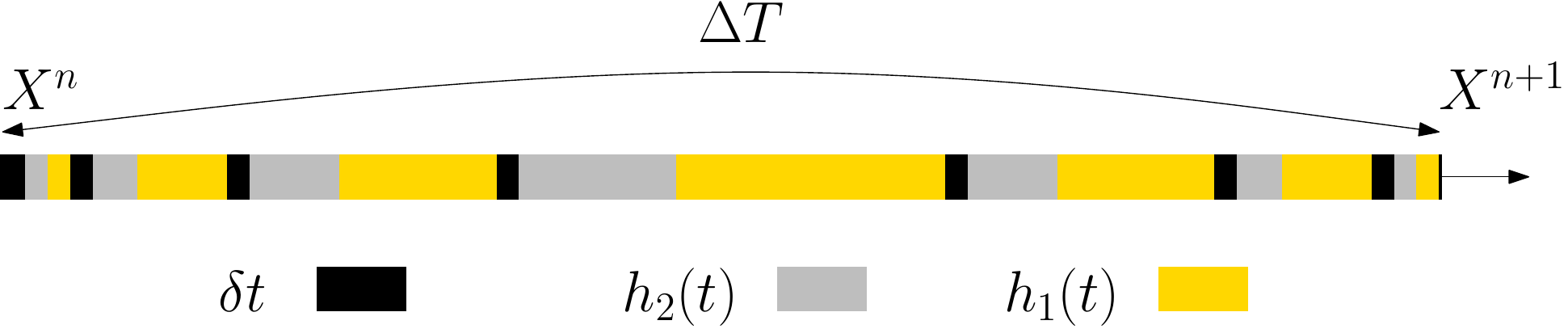}}
\caption{Schematics of a iterated-HMM and VSSHMM}\label{fig:VSSHMM:sche}
\end{figure}

The time step for each scale is not necessarily constant. In \cite{VSHMM}, a variable step size Heterogeneous Multiscale Method (VSHMM) was shown to have higher accuracy than constant time stepping for two-scale problems. We employ the same strategy here and extend the VSHMM to improve the accuracy of the proposed integrator for several time scales. Variable time stepping has another effect in addition to improved accuracy; variable time stepping for intermediate and fast scale variables can be seen as a Monto Carlo integration in a high dimensional space although the method does not include any random sampling for integration. 

Although the proposed method has the aforementioned benefits and is applicable to systems with several temporal scales, we have to mention that the existence of deterministic averaging equation of slow variables is essential to develop our method. 
For general several time scale problems, the longest time scale limit can be stochastic \cite{SDElimit}. 
Thus in this study we focus on systems which posses deterministic limit equations (an extension of our method to handle stochastic limits of several temporal scale problems will be reported in the near future). We also note the reader that there are efficient methods for stiff dissipative systems without scale separation; implicit methods for small dimensional systems and Chebyshev method for large dimensional systems. We include stiff dissipative systems with potential application of the proposed method to concurrent multiscale problems; see \cite{HMM} for concurrent multiscale problems using HMM and references therein.

This paper is organized in the following way. In section \ref{sec:averaging}, we discuss averaging equations for several temporal scale problems with deterministic limits. For dissipative stiff problems, we show that  the different time scales can be treated as one time scale separated from the time scale of slow variables. For highly oscillatory problems, we show that the calculation of the effective force is related to an integration in a high dimensional space. The fast integrators for several temporal scale problems are then proposed in section \ref{sec:VSHMM}. Section \ref{sec:numerical} presents numerical examples with fast relaxation and a highly oscillatory behavior to validate the proposed method with concluding remarks in section \ref{sec:conclusion}.

\section{Averaging equations}\label{sec:averaging}
In this section, we discuss averaging equations of several temporal scale problems with deterministic limits, which play a crucial role in the development of the fast integrator. The time integrator proposed in section \ref{sec:VSHMM} achieves high efficiency by approximating the averaging equations of the slow dynamics. Without loss of generality, for an easy exposition of the key idea, we consider three different scale problems 
\begin{equation}
\frac{dx}{dt}=f_0(x)+\frac{f_1(x)}{\epsilon_1}+\frac{f_2(x)}{\epsilon_2},\quad x(0)=x_0,\quad 0<\epsilon_2<\epsilon_1\ll 1.
\end{equation}
For the existence of hidden slow variables, we assume that 
\begin{assumption}\label{assumption:diffeomorphism}There exists a $C^1$ diffeomorphism $\Psi:x\in\mathbb{R}^{n}\to(\xi,\eta,\zeta)\in\mathbb{R}^{n_1+n_2+n_3}$ where $(\xi,\eta,\zeta)$ satisfies
\begin{equation}\label{eq:3scmodel}
	\begin{split}
	\frac{d\xi}{dt}&=f(\xi,\eta,\zeta)\\
	\frac{d\eta}{dt}&=\frac{1}{\epsilon_1}g(\xi,\eta,\zeta)\\
	\frac{d\zeta}{dt}&=\frac{1}{\epsilon_2}h(\xi,\eta,\zeta)\\
	(\xi(0),\eta(0),\zeta(0))&=(\xi_0,\eta_0,\zeta_0)
	\end{split}
\end{equation}
for $\xi\in\mathbb{R}^{n_1}$, $\eta\in\mathbb{R}^{n_2}$ and $\zeta\in\mathbb{R}^{n_3}$, $n=n_1+n_2+n_3$. 
\end{assumption}
Based on Definition \ref{def:slowvariable}, $\xi$ is the slow variable and $\eta$ and $\zeta$ are the fast variables of the system which have unbounded derivative for $\epsilon\to0$. For the existence of an averaging equation, we further assume the following
\begin{assumption}\label{assumption:general}For fixed $\xi$, the other variables $\eta$ and $\zeta$ are ergodic with an invariant measure $\mu^{\eta,\zeta}_{\xi}(\eta,\zeta)$.
\end{assumption}
Under this assumption, there exists a deterministic averaging equation 
\begin{equation}\label{eq:effective}
	\begin{split}
	\frac{d\Xi}{dt}&=F(\Xi)\\
	\Xi(0)&=\xi_0
	\end{split}
\end{equation}
where
$F(\Xi)=\iint f(\Xi,\eta,\zeta)d\mu^{\eta,\zeta}_{\xi}(\eta,\zeta)$.
The solution of the averaged equation $\Xi$, which approximate the slow variable $\xi$ of the original equation, is normally expected to have the following type of error bound (see \cite{AVG1,HMMODE,HMM} for example)
\begin{equation}\label{eq:avgerror}
\|\xi-\Xi\|_{\infty}\leq C\epsilon_1^a
\end{equation}
for constants $a>0$ and $C$ which are independent of $\epsilon$ over the time interval of interest. Under this assumption we can apply a two-scale integrator such as HMM \cite{HMMODE} to numerically approximate the invariant measure of $(\eta,\zeta)$ but this computation is also another two-scale problem which is computationally expensive for $\frac{\epsilon_2}{\epsilon_1}\ll1$. In \cite{3scHMM,iteratedHMM}, iterated application of the two-scale HMM was successfully applied to overcome the aforementioned problem under an ergodicity assumption of $\zeta$ along for fixed $\eta$ in addition to $\xi$.

Our goal of the fast integrator proposed in this study is a development of a method whose computational complexity is significantly less than the iterated application of two-scale integrators. Note that the later approach has an exponentially increasing computational cost as the number of different scales increases. On the other hand, our proposed will be shown in section \ref{sec:VSHMM} has a linearly increasing computational cost as the number of different scales increases. Before we describe the fast integrator in the next section, we first study analytic results, averaging equations of several scale problems, which play an important role in the development of the fast integrator. The main point of the following result is that the $\epsilon_2$ scale can be relaxed to the $\epsilon_1$ scale and this relaxation provides the same averaging equation of the original equation without relaxation. We first consider a dissipative system whose Jacobian has eigenvalues with only negative real parts followed by highly oscillatory systems.

\subsection{Dissipative case}
We assume the following fact for dissipative systems of the form of \eqref{eq:3scmodel} which guarantees the existence of iterated averaging equation for $\xi$.

\begin{assumption}\label{assumption:dissipative}\end{assumption}
\begin{itemize}
\item For fixed $\xi$ and $\eta$, the Jacobian of $h$, $\partial_{\zeta}h$, has eigenvalues of negative real parts and $\zeta$ converges to a unique fixed point $\zeta^*:(\xi,\eta)\in\mathbb{R}^{n_1+n_2}\to\mathbb{R}^{n_3}$ such that $h(\xi,\eta,\zeta^*(\xi,\eta))=0$. That is, $\zeta$ has a unique invariant measure $\delta(\zeta-\zeta^*(\xi,\eta))$ for fixed $\xi$ and $\eta$. We further assume that $\zeta^*$ is differentiable in $\eta$.
\item For fixed $\xi$, the Jacobian of $\tilde{g}:=g(\xi,\eta,\zeta^*(\xi,\eta))$ has eigenvalues of negative real parts and $\eta$ converges to a unique fixed point $\eta^*:\xi\subset\mathbb{R}^{n_1}\to\mathbb{R}^{n_2}$ such that $\tilde{g}(\xi,\eta^*(\xi))=0$. That is, for  $\epsilon_2\to 0$, $\eta$ is ergodic with a unique invariant measure $\delta(\eta-\eta^*(\xi))$ for fixed $\xi$.
\end{itemize}

For a dissipative system of the form of \eqref{eq:3scmodel} satisfying the above assumption for iterated averaging, we show that the iterated averaging equation is the same as the averaging equation of the following two-scale problems obtained by relaxing $\epsilon_2$ to $\epsilon_1$
\begin{equation}\label{eq:2scmodel}
	\begin{split}
	\frac{d\xi}{dt}&=f(\xi,\eta,\zeta),\\
	\frac{d\eta}{dt}&=\frac{1}{\epsilon_1}g(\xi,\eta,\zeta),\\
	\frac{d\zeta}{dt}&=\frac{1}{\epsilon_1}h(\xi,\eta,\zeta).
	\end{split}
\end{equation}
Before we prove theorem \ref{thm:dissipative}, we need a technical lemma. The following arguments are for fixed $\xi$ and thus we suppress $\xi$ for simplicity of exposition.
\begin{lem}\label{lem:diffofinverse}
$\displaystyle\left.\frac{\partial \zeta^*(\eta)}{\partial \eta}\right|_{\eta=\eta^*}=-(\partial_{\zeta} h)^{-1}\partial_{\eta} h$
\end{lem}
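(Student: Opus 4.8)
The plan is to obtain the identity by implicit differentiation of the fixed-point relation that defines $\zeta^*$. By the first item of Assumption~\ref{assumption:dissipative}, for each fixed $\eta$ (recall $\xi$ is suppressed) the point $\zeta^*(\eta)$ is characterized by $h(\eta,\zeta^*(\eta))=0$, and $\partial_{\zeta} h$ has eigenvalues with only negative real parts, hence is nonsingular near the fixed point. This invertibility is precisely the hypothesis of the implicit function theorem, which both guarantees that $\zeta^*$ is $C^1$ (consistent with the differentiability of $\zeta^*$ already assumed) and licenses differentiating the defining relation.

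First I would differentiate the identity $h(\eta,\zeta^*(\eta))=0$ with respect to $\eta$ by the chain rule, treating the second argument of $h$ as the composite function $\zeta^*(\eta)$. This produces
\[
\partial_{\eta} h + (\partial_{\zeta} h)\,\frac{\partial \zeta^*}{\partial \eta}=0,
\]
in which the partial derivatives of $h$ are understood along the graph $(\eta,\zeta^*(\eta))$. Next I would left-multiply by $(\partial_{\zeta} h)^{-1}$, which exists by the spectral hypothesis above, and rearrange to isolate the Jacobian of $\zeta^*$, obtaining $\frac{\partial \zeta^*}{\partial \eta}=-(\partial_{\zeta} h)^{-1}\partial_{\eta} h$. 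Finally, specializing to $\eta=\eta^*$ — that is, evaluating the derivatives of $h$ at the point $(\eta^*,\zeta^*(\eta^*))$ — yields the stated formula.

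The only step requiring genuine care, and thus the main (if mild) obstacle, is justifying the invertibility of $\partial_{\zeta} h$ on a full neighborhood of the equilibrium rather than merely at a single point, so that $\zeta^*$ is well defined and differentiable there and the chain rule applies. This is supplied directly by Assumption~\ref{assumption:dissipative}: a matrix all of whose eigenvalues have nonzero real part is nonsingular, and by continuity of the spectrum this persists in a neighborhood. With that in hand the remainder is routine implicit differentiation, so no further machinery is needed.
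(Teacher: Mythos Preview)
Your proof is correct and follows essentially the same approach as the paper: both arguments amount to implicit differentiation of the defining relation $h(\eta,\zeta^*(\eta))=0$, using invertibility of $\partial_\zeta h$ from Assumption~\ref{assumption:dissipative}. The only cosmetic difference is that the paper writes out the first-order Taylor expansion explicitly rather than invoking the chain rule directly, but the content is identical.
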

\begin{proof} 
For $\delta\in\mathbb{R}^{n_2}$ such that $0<\|\delta\|\ll 1$, let $h(\eta^*,\zeta^*_0)=0$ and $h(\eta^*+\delta,\zeta^{*}_{\delta})=0$ where $\zeta^*_0=\zeta^*(\eta^*)$ and $\zeta^*_{\delta}=\zeta^*(\eta^*+\delta)$. From Taylor series expansion at $(\eta^*,\zeta^*$), we have
\begin{equation}
\begin{split}
0=h(\eta^*+\delta,\zeta^{*}_{\delta})=&\partial_{\eta}h(\eta^*,\zeta^*_0)\delta+\partial_{\zeta}h(\eta^*,\zeta^*_0)(\zeta^{*\delta}-\zeta^*_0)+\mathcal{O}(\|\delta\|^2)\\
&+\mathcal{O}(\|\zeta^{*}_{\delta}-\zeta^*_0\|^2)+\mathcal{O}(\|\delta\| \|\zeta^{*}_{\delta}-\zeta^*_0\|)
\end{split}
\end{equation}
Invertibility of $\partial_{\zeta}h$ (which is guaranteed by eigenvalues with negative real parts) and differentiability of $\zeta^*(\eta)$ in $\eta$ implies that $\mathcal{O}(\|\zeta^{*}_{\delta}-\zeta^*_0\|)=\mathcal{O}(\|\delta\|)$. Thus we have
\begin{equation}
\zeta^{*}_{\delta}-\zeta^*_0=-(\partial_{\zeta}h)^{-1}\partial_{\eta}h\delta+(\partial_{\zeta}h)^{-1}\mathcal{O}(\|\delta\|^2)
\end{equation}
which proves the lemma.
\end{proof}

We now prove the main theorem for dissipative systems with an iterated averaging equation.
\begin{thm}\label{thm:dissipative}
For fixed $\xi$, under Assumption \ref{assumption:dissipative}, the unique fixed point of the three scale problem \eqref{eq:3scmodel}, $(\eta^*,\zeta^*_0)=(\eta^*(\xi),\zeta^*(\xi,\eta^*(\xi)))$, is also a unique fixed point of the two-scale problem \eqref{eq:2scmodel} and is asymptotically stable.
\end{thm}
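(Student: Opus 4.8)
The plan is to separate the statement into two assertions---that $(\eta^*,\zeta_0^*)$ is the common unique fixed point of \eqref{eq:3scmodel} and \eqref{eq:2scmodel}, and that it is asymptotically stable for \eqref{eq:2scmodel}---and to treat the (immediate) equilibrium claim first. The equilibria of both systems are the common zeros of the vector fields $g$ and $h$, since the positive factors $1/\epsilon_1,1/\epsilon_2$ do not alter zero sets; so for either system a point is a fixed point iff $g(\eta,\zeta)=0$ and $h(\eta,\zeta)=0$ simultaneously. Under Assumption \ref{assumption:dissipative} the equation $h=0$ forces $\zeta=\zeta^*(\eta)$, and substituting into $g=0$ reduces it to $\tilde g(\eta)=0$, whose unique root is $\eta^*$. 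Hence $(\eta^*,\zeta_0^*)=(\eta^*,\zeta^*(\eta^*))$ is the unique fixed point of both systems, with no estimates needed.

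For the stability claim I would linearize \eqref{eq:2scmodel} (with $\xi$ frozen) at $(\eta^*,\zeta_0^*)$. Rescaling time by the positive constant $\epsilon_1$ does not change the stability type, so asymptotic stability is equivalent to the block matrix
\[ M=\begin{pmatrix}\partial_\eta g & \partial_\zeta g\\ \partial_\eta h & \partial_\zeta h\end{pmatrix} \]
(evaluated at $(\eta^*,\zeta_0^*)$) being Hurwitz. Assumption \ref{assumption:dissipative} provides two facts: the lower-right block $\partial_\zeta h$ is Hurwitz, and the reduced Jacobian $\tilde g'(\eta^*)$ is Hurwitz. I would tie the latter to $M$ through the chain rule $\tilde g'(\eta^*)=\partial_\eta g+\partial_\zeta g\,\partial_\eta\zeta^*$ together with Lemma \ref{lem:diffofinverse}, which gives $\partial_\eta\zeta^*=-(\partial_\zeta h)^{-1}\partial_\eta h$, so that
\[ \tilde g'(\eta^*)=\partial_\eta g-\partial_\zeta g\,(\partial_\zeta h)^{-1}\partial_\eta h \]
is exactly the Schur complement of $\partial_\zeta h$ in $M$. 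The two hypotheses are therefore stability of the lower-right block and stability of the Schur complement.

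It remains to conclude that $M$ itself is Hurwitz from stability of $\partial_\zeta h$ and of its Schur complement $\tilde g'(\eta^*)$, and this is the main obstacle. It is genuinely delicate: for a general block matrix these two conditions do not force $M$ to be Hurwitz, because the spectrum of $M$ is not the union of the spectra of $\partial_\zeta h$ and of the Schur complement (the Schur complement of $M-\lambda I$ depends nonlinearly on $\lambda$, so only $\det M=\det(\partial_\zeta h)\,\det(\tilde g'(\eta^*))$ is transparent, not the location of the eigenvalues). The route I would pursue is a composite weighted Lyapunov function built from the two blocks: solve the Lyapunov equations associated with the Hurwitz matrices $\tilde g'(\eta^*)$ and $\partial_\zeta h$ to obtain positive-definite $P_\eta$ and $P_\zeta$, pass to the shifted variables $\delta\eta=\eta-\eta^*$ and $\delta w=\zeta-\zeta^*(\eta)$ so that the diagonal contributions are governed by these two blocks, and test $V=\theta\,\delta\eta^\top P_\eta\,\delta\eta+(1-\theta)\,\delta w^\top P_\zeta\,\delta w$ against the full linearized flow. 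The derivative $\dot V$ then carries, beyond the two negative-definite block terms, indefinite cross terms generated by the off-diagonal blocks $\partial_\zeta g$ and $\partial_\eta h$; exhibiting a weight $\theta$ that renders $\dot V$ negative definite is precisely the crux, and it is the step that must exploit the dissipative structure---the relative sizes of the blocks, not merely their individual stability. I expect this cross-term control, rather than any of the preceding algebra, to be the decisive and most demanding part of the argument.
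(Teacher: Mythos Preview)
Your uniqueness argument is correct and matches the paper's. For stability the paper proceeds differently and more briskly: it left-multiplies the Jacobian $M$ by the block-unipotent matrix that kills the $\partial_\zeta g$ entry, arriving at a block lower-triangular matrix with diagonal blocks $\partial_\eta\tilde g$ and $\partial_\zeta h$, and then reads the eigenvalues of $M$ off those blocks. You are right to be wary of exactly this step---a one-sided row operation preserves the determinant, not the spectrum---and your identification of $\partial_\eta\tilde g$ as the Schur complement of $\partial_\zeta h$ in $M$ is precisely the algebra underlying the paper's reduction.

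The gap in your proposal is the deferred cross-term estimate, and it is fatal rather than merely technical: under Assumption~\ref{assumption:dissipative} alone the stability conclusion is false, so no choice of weight $\theta$ in your composite Lyapunov function can succeed. With $n_2=n_3=1$ (and $\xi$ suppressed) take $g(\eta,\zeta)=5\eta+\zeta$ and $h(\eta,\zeta)=-10\eta-\zeta$. Then $\partial_\zeta h=-1<0$, $\zeta^*(\eta)=-10\eta$, $\tilde g(\eta)=-5\eta$ with $\partial_\eta\tilde g=-5<0$, so both parts of the assumption hold; yet $M=\bigl(\begin{smallmatrix}5&1\\-10&-1\end{smallmatrix}\bigr)$ has eigenvalues $2\pm i$, and the origin is an unstable spiral for the relaxed system \eqref{eq:2scmodel}. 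Your instinct that controlling the cross terms ``must exploit the dissipative structure'' beyond mere Hurwitzness of the two blocks is exactly right; the stated hypotheses simply do not supply that structure. Both the paper's spectral argument and any completion of your Lyapunov argument would need an additional assumption---for instance a bound on the off-diagonal coupling relative to the block dissipation, or retaining a small ratio $\epsilon_2/\epsilon_1$ in \eqref{eq:2scmodel} rather than fully equalizing the fast scales.
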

\begin{proof}
It is straightforward that $(\eta^*,\zeta^*_0)$ is a fixed point of \eqref{eq:2scmodel}. Let $(\tilde{\eta},\tilde{\zeta})$ is a fixed point of \eqref{eq:2scmodel}. From the uniqueness of $\zeta^*$, for fixed $(\xi,\tilde{\eta})$, we have $\tilde{\zeta}=\zeta^*(\tilde{\eta})$ and from the uniqueness of $\tilde{g}(\xi,\eta)=g(\xi,\eta,\zeta^*(\xi,\eta))$ for fixed $\xi$, $\tilde{\eta}=\eta^*$. Thus $(\eta^*,\zeta^*_0)$ is a unique fixed point of \eqref{eq:2scmodel}.

Next we show asymptotic stability of $(\eta^*,\zeta^*_0)$. For this, it suffices to show that the Jacobian matrix of \eqref{eq:2scmodel}
\begin{equation}\label{eq:jacobian}
	\left.\begin{pmatrix}\partial_{\eta} g& \partial_{\zeta} g\\ \partial_{\eta} h& \partial_{\zeta} h\end{pmatrix}
\right|_{\eta^*,\zeta^*_0}
\end{equation}
has eigenvalues of negative real parts. By eliminating $\partial_{\zeta} g$, we have
\begin{equation}
	\begin{pmatrix}\partial_{\eta} g-\partial_{\zeta} g(\partial_{\zeta} h)^{-1}\partial_{\eta} h& 0\\ \partial_{\eta} h& \partial_{\zeta} h\end{pmatrix}
\end{equation}
From Assumption \ref{assumption:dissipative}, $\partial_{\eta} \tilde{g}=\partial_{\eta}g+\partial_{\zeta}g\frac{\partial\zeta^*}{\partial \eta}$ has eigenvalues of negative real part. Thus, using Lemma \ref{lem:diffofinverse}, \eqref{eq:jacobian} has eigenvalues of negative real parts which implies asymptotic stability using a Lyapunov function $V=\frac{1}{2}(\|\eta\|^2+\|\zeta\|^2)$ \cite{Arnold,HS1974}.
\end{proof}

Theorem \ref{thm:dissipative} implies that the scale separation between $\eta$ and $\zeta$ is not necessary to resolve the invariant measure of  $(\eta,\zeta)$ in the iterative averaging (which is a Dirac measure for dissipative systems under Assumption \ref{assumption:dissipative}). By relaxing the fast scale $\mathcal{O}(1/\epsilon^2)$ to slower scale $\mathcal{O}(1/\epsilon)$ (but still fast enough compared to the slow scale $\mathcal{O}(1)$ variable  of our interest $\xi$) we can obtain the same invariant measure of the iterated averaging equation. Thus the relaxed two-scale problem \eqref{eq:2scmodel} has the same averaging equation as the three scale problem \eqref{eq:3scmodel}.

\begin{cor}
For fixed $\xi$, the three scale problem \eqref{eq:3scmodel} and the relaxed two-scale problem \eqref{eq:2scmodel} have the same averaging equation.
\end{cor}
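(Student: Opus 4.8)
The plan is to reduce everything to the single observation that the averaging equation \eqref{eq:effective} is determined \emph{solely} by the invariant measure $\mu^{\eta,\zeta}_{\xi}$ of the fast variables for fixed $\xi$, through the integral formula $F(\Xi)=\iint f(\Xi,\eta,\zeta)\,d\mu^{\eta,\zeta}_{\xi}$. Since the slow vector field $f$ is literally the same in \eqref{eq:3scmodel} and \eqref{eq:2scmodel} (the relaxation $\epsilon_2\mapsto\epsilon_1$ alters only the fast block), it suffices to show that the two problems induce the \emph{same} invariant measure on $(\eta,\zeta)$; the equality of effective forces, and hence of averaging equations, then follows by substitution.

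First I would identify the iterated invariant measure of the three-scale problem. Under Assumption \ref{assumption:dissipative}, the fastest variable $\zeta$ relaxes, for fixed $(\xi,\eta)$, to the unique stable equilibrium $\zeta^*(\xi,\eta)$, so its conditional invariant measure is the Dirac mass $\delta(\zeta-\zeta^*(\xi,\eta))$. Substituting this into the $\eta$-dynamics produces the reduced field $\tilde g(\xi,\eta)=g(\xi,\eta,\zeta^*(\xi,\eta))$, whose unique stable equilibrium is $\eta^*(\xi)$, giving conditional measure $\delta(\eta-\eta^*(\xi))$. Composing the two nested stages yields the joint iterated invariant measure $\mu^{\eta,\zeta}_{\xi}=\delta(\eta-\eta^*)\,\delta(\zeta-\zeta^*_0)$, concentrated at the single point $(\eta^*,\zeta^*_0)=(\eta^*(\xi),\zeta^*(\xi,\eta^*(\xi)))$.

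Next I would invoke Theorem \ref{thm:dissipative} for the relaxed problem \eqref{eq:2scmodel}. The theorem asserts that this very point $(\eta^*,\zeta^*_0)$ is the \emph{unique, asymptotically stable} fixed point of the joint $(\eta,\zeta)$ dynamics now run at the common scale $1/\epsilon_1$. Asymptotic stability forces the dynamics to collapse onto this equilibrium, so the invariant measure of the relaxed problem is again the Dirac mass $\delta(\eta-\eta^*)\,\delta(\zeta-\zeta^*_0)$. With the two invariant measures matched, both integrals collapse to the same point evaluation $F(\Xi)=f(\Xi,\eta^*,\zeta^*_0)$, and the two averaging equations coincide.

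I do not expect a genuine obstacle, since the statement is essentially a restatement of the asymptotic-stability result already established. The one point demanding care is the inference from \emph{asymptotically stable fixed point} to \emph{Dirac invariant measure}: one must argue that for a dissipative system the unique attracting equilibrium carries the entire invariant measure, with no competing recurrent set surviving in the relevant domain $D$, which is exactly what the Lyapunov argument $V=\tfrac12(\|\eta\|^2+\|\zeta\|^2)$ behind Theorem \ref{thm:dissipative} provides. A secondary bookkeeping check is that the nested-averaging measure of \eqref{eq:3scmodel} is genuinely the product of the two Dirac masses rather than some subtler double limit; this is immediate from the two-level equilibrium structure imposed by Assumption \ref{assumption:dissipative}.
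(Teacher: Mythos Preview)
Your proposal is correct and follows essentially the same approach as the paper. The paper presents the corollary as an immediate consequence of Theorem~\ref{thm:dissipative}, noting in the preceding paragraph that the invariant measure of $(\eta,\zeta)$ is the same Dirac measure for both problems; your argument makes this reasoning explicit by unpacking the iterated Dirac structure from Assumption~\ref{assumption:dissipative} and then matching it against the relaxed-problem equilibrium via the theorem.
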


The uniqueness of the fixed point in Assumption \ref{assumption:dissipative} is strong and seldom satisfied in general. But we have uniqueness locally; for a small open neighborhood of a fixed point, negative real parts of all eigenvalues imply that the Jacobian of the fast dynamics is invertible and by the implicit function theorem, the fixed point is unique. Thus, if the initial conditions $\eta_0$ and $\zeta_0$ of $\eta$ and $\zeta$ respectively are sufficiently close to the fixed points $\eta^*$ and $\zeta^*$ for the initial $\xi_0$, the uniqueness is satisfied and we can relax $\mathcal{O}(\frac{1}{\epsilon_2})$ scale to $\mathcal{O}(\frac{1}{\epsilon_1})$ to approximate the invariant measure efficiently. The transient behavior of the initial value of the fast variables to the corresponding fixed points has another effect on the accuracy of the approximation by an averaging equation. In \cite{VSHMM}, for two-scale problems, it is emphasized that by successfully capturing the fast transient behavior of the fast variables, the approximation of the slow variable by the averaging solution increases from $a=1/2$ to $a=1$ in \eqref{eq:avgerror}. In section \ref{subsec:nume:dissipative} we check the importance of the resolved fast trasient behavior of the fast variables through a numerical example.

\subsection{Highly oscillatory case}
For general highly oscillatory problems with several scales where the invariant measure of the fast dynamics $\mu_{\xi}^{\eta,\zeta}(\eta,\zeta)$ is not a Dirac type, we cannot derive a theorem equivalent to \ref{thm:dissipative} for the dissipative case; the invariant measure of $\zeta$ is different for different $\eta$. Thus we relax the system \eqref{eq:3scmodel} by assuming that the fast variables have no direct interactions; that is the dynamics of $\eta$ and $\zeta$ for a fixed $\xi$ are independent of each other :
\begin{equation}\label{eq:simpler3scmodel}
	\begin{split}
	\frac{d\xi}{dt}&=f(\xi,\eta,\zeta),\\
	\frac{d\eta}{dt}&=\frac{1}{\epsilon_1}g(\xi,\eta),\\
	\frac{d\zeta}{dt}&=\frac{1}{\epsilon_2}h(\xi,\zeta).
	\end{split}
\end{equation}
Although there are no direct interactions between the fast variables $\eta$ and $\zeta$ in \eqref{eq:simpler3scmodel}, the fast variables have indirect interactions through the slow variable as each fast variable has an interaction with the slow variable.
Under this setting, the invariant measure $\mu_{\xi}^{\eta,\zeta}(\eta,\zeta)$ of Assumption \ref{assumption:general} can be represented by a product of two measures $\mu_{\xi}^{\eta}(\eta)$ and $\mu_{\xi}^{\zeta}(\zeta)$ which are the invariant measures of $\eta$ and $\zeta$ respectively. Thus the averaging equation \eqref{eq:effective} has the following form
\begin{equation}\label{eq:VSSHMM:effective}
\frac{d\Xi}{dt}=F(\Xi):=\iint f(\Xi,\eta,\zeta)d\mu_{\Xi}^{\eta}(\eta)d\mu_{\Xi}^{\zeta}(\zeta).
\end{equation}

If we can identify the slow and fast variables, there is a natural method whose complexity increases linearly as the number of fast variables increases. The idea is to solve the two fast variables independently for sufficiently long time (but short compared to the time step of the slow variable for computational efficiency). From these local calculations, say there are $N$ and $M$ steps of evolution of $\eta$ and $\zeta$ respectively, we find all possible combinations between $\eta_1$ and $\eta_2$ to compute the averaging force 
\begin{equation}\label{eq:uniform_int}
F=\sum_{n,m}^{N,M} f(\Xi,\eta_n,\zeta_m).\end{equation}
One disadvantage of this approach is that if there are many fast variables following different invariant measures then \eqref{eq:uniform_int} corresponds to integration in high dimensional space and thus the above approach is computationally expensive compared to Monte-Carlo integration. Also, \eqref{eq:uniform_int} requires a priori identification of the slow and fast variables (and their corresponding forcing formula) for the computation of the effective forcing of the averaging solution. 

In the next section, we propose our method which does not require a priori identification of the slow and fast variables yet the proposed method captures the hidden slow variables; see \cite{MSHMM, FLAVORS, BFHMM} for two-scale problems which do not require a priori identification of the slow and fast variables. Also one key ingredient of our method is to use variable stepping for intermediate time components which shows a Monte-Carlo effect in the integration of the effective forcing $F(\Xi)$.

\section{Variable Step Size Heterogeneous Multiscale Methods}\label{sec:VSHMM}
The key idea of VSHMM is to include different components of the force depending on the variable step size, from the full $f_{\epsilon}(x)$ for the shortest step size to only the slowest components for the longest step size. The intermediate step size will contain the intermediate to slow components of $f_{\epsilon}(x)$. That is, the method uses different time steps in the evolution of the system using the following split three problems,
\begin{align}
\frac{dx}{dt}=&f_0(x)+\frac{f_1(x)}{\epsilon_1}+\frac{f_2(x)}{\epsilon_2}\label{eq:VSSHMM:per3},\\
\frac{dx}{dt}=&f_0(x)+\frac{f_1(x)}{\epsilon_1}\label{eq:VSSHMM:per2},\\
\frac{dx}{dt}=&f_0(x)\label{eq:VSSHMM:per1}.
\end{align}
That is, for the evolution of each equation, we use time steps $\delta t$, $h_2$ and $h_1$ for (\ref{eq:VSSHMM:per3}), (\ref{eq:VSSHMM:per2}) and (\ref{eq:VSSHMM:per1}) respectively. The smallest time step $\delta t$ is constant while $h_1(t)$ and $h_2(t)$ are variable in time which are determined by special functions. We solve different scale parts for the same number of times as the slowest part and the elapsed time is determined by the sum of each time step, $\delta t+h_2+h_1$; thus it is straightforward that the computational complexity of our method increases linearly proportional to the number of different scales.

For two-scale problems, it is shown in \cite{VSHMM} that variable time steps can be employed to achieve high accuracy in the approximation of the hidden slow variables. Thus we use the variable time stepping for $h_1$ to achieve high accuracy. Here we further employ the variable time stepping for the intermediate scales ($h_2$ in our setting) and we show that variable mesoscopic time steps have an effect of Monte-Carlo integration in the estimation of the effective force of the slow variable. Note that for the dissipative case, if we can resolve the fast transient behavior of the fast variables using variable time steps (in this case, fine time steps at the beginning), then uniform time steps for each different scales are sufficient to approximate the invariant measure of the fast variables which is essential to compute the effective force.

For the determination of the mesoscopic time steps $h_1(t)$ and $h_2(t)$, we use a special function satisfying moment and regularity conditions. For a given $q\in\mathbb{N}$, let $K\in C^q_c((0,1))$ have a compact support in $(0,1)$ such that
\begin{eqnarray}
	\int_0^1K(t)dt&=&1,\\
	\frac{d^rK(t)}{dt^r}&=&0,\quad r=0,1,...,q \textrm{ for }t=0,1.
\end{eqnarray}
Contrary to general time integrators which provide solutions at each time step, VSHMM provides solutions only at the sampling time interval $\Delta T$ longer than the largest time step $h_1$; VSHMM has intermediate solution states other than the sampling interval but high accuracy of the solutions are guaranteed only at the sampling interval (see figure \ref{fig:VSSHMM:vsshmm}). For a given macro sampling time step $\Delta T$, savings factors $\alpha_1>\alpha_2>1$ and $m\in\mathbb{N}$, the two mesoscopic time steps, $h_1(t)$ and $h_2(t)$, for the evolution of (\ref{eq:VSSHMM:per1}) and (\ref{eq:VSSHMM:per2}) respectively, are given by 
\begin{align}
h_1(t)&=\alpha_1 {\delta t}{K}_{\Delta T,q}(\Theta_{\Delta T,q}^{-1}(t\mod \Delta T)),\\
h_2(t)&=\alpha_2 {\delta t}{K}_{\Delta T/m,q}(\Theta_{\Delta T,q}^{-1}(t\mod \frac{\Delta T}{m}))
\end{align}
when $K_{\Delta T,q}$ is a rescaled version of $K$,
$$K_{\Delta T,q}=\frac{1}{\Delta T}K(\frac{t}{\Delta T})$$
and $\Theta_{\Delta T,q}(t)$ is the antiderivative of ${K}_{\Delta T,q}$ with $\Theta_{\Delta T,q}(0)=0$.
It can be easily verified that $K_{\Delta T/m,q}$ satisfies the following moment and regularity conditions
\begin{eqnarray}
	\label{eq:kerprop1}\int {K}_{\Delta T/m,q}dt&=&\frac{\Delta T}{m},\\
	\label{eq:kerprop2}\frac{d^r {K}_{\Delta T,q}(t)}{dt^r}&=&0,\quad r=0,1,...,q,\quad t=0,\Delta T.
\end{eqnarray}

As mentioned above, the largest time step $h_1(t)$ is variable to achieve high accuracy and the intermediate mesoscopic time step $h_2(t)$ is also variable to capture the fast transient behavior of the dissipative problems or to expedite the convergence to the corresponding invariant measure for highly oscillatory problems. To see the later effect, consider the case when $\eta$ and $\zeta$ are periodic with periods $\epsilon_1$ and $\epsilon_2$ respectively. $\zeta$ is always evolved using a constant time step $\delta t$. If we use a constant time step $h_2$ for $\eta$, the evolution of $\eta$ and $\zeta$ will be a straight line in the $\eta-\zeta$ plane (see figure \ref{fig:VSSHMM:timeplaneconst}). If the ratio between $h_2$ and $\delta t$, say $\beta$, is irrational the straight line will cover a dense region of the torus after sufficiently long time. But it usually requires very long run and thus after only a few periodic motions, the points will still look like a straight line (see figure \ref{fig:VSSHMM:timeplaneconstmod} for the projected points onto the torus).
\begin{figure}
\centering
\subfloat[Constant time step\label{fig:VSSHMM:timeplaneconst}]{\includegraphics[width=0.45\textwidth]{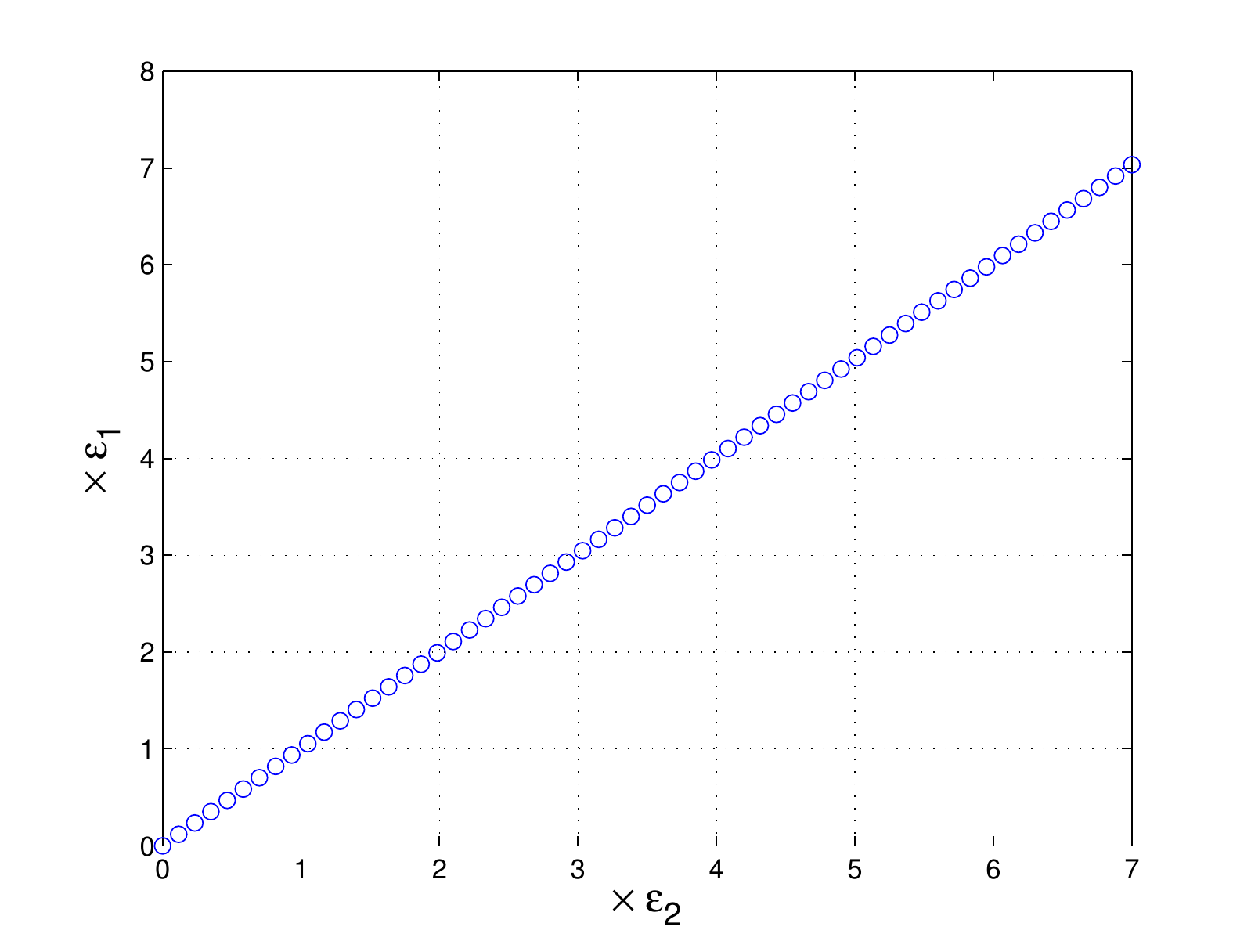}}
\subfloat[Projection onto torus\label{fig:VSSHMM:timeplaneconstmod}]{\includegraphics[width=0.45\textwidth]{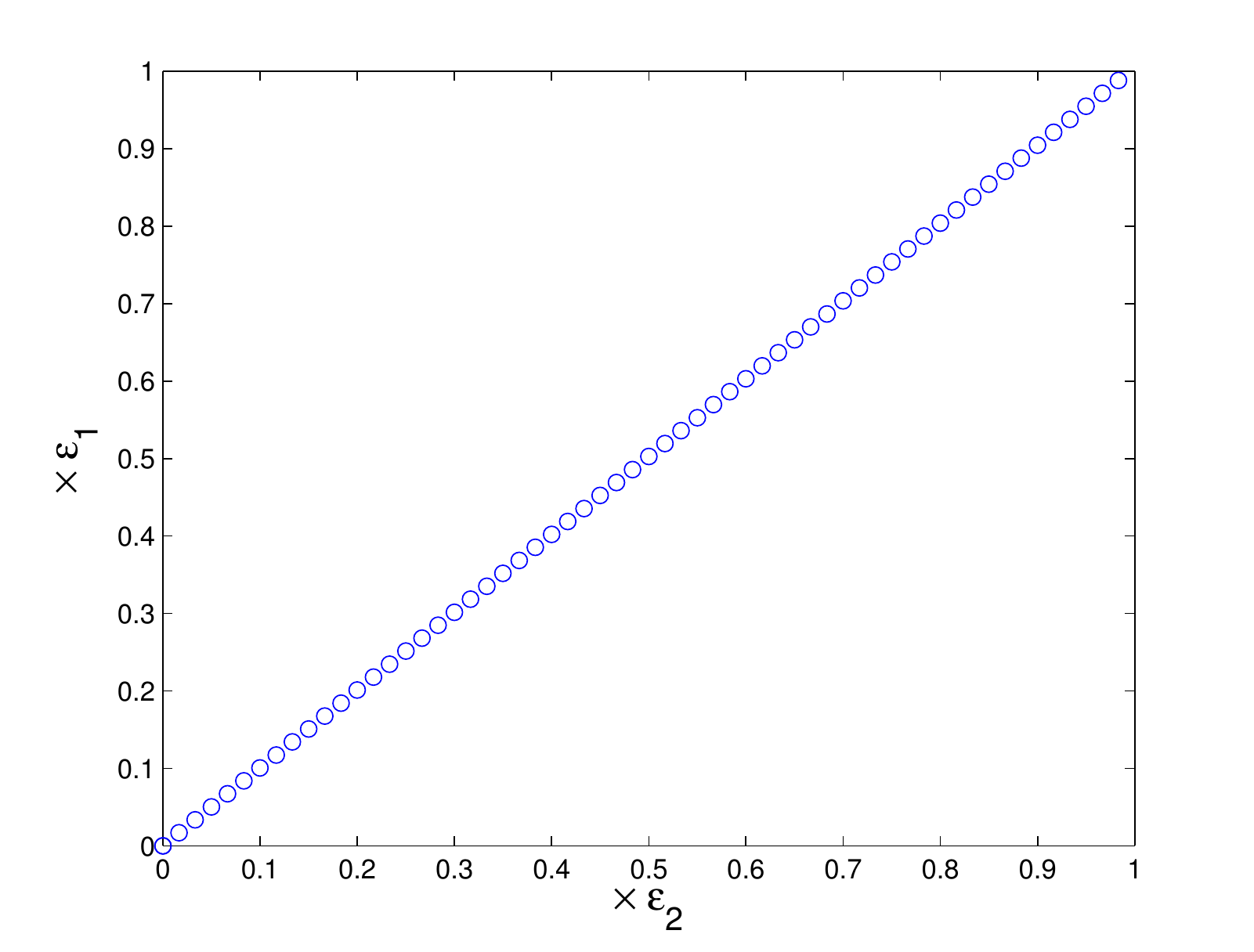}}
\caption{60 sampling points of constant time steps in several copies of a torus. $\beta=\sqrt{1.01}$}\label{fig:VSSHMM:time plane}
\end{figure}

On the other hand, the variable time stepping of $h_2$ becomes a curve (see figure \ref{fig:VSSHMM:timeplanevar} for the case when $K(t)=\left(1+\cos({2\pi (t-1/2)})\right)$) and its projection onto the torus (figure \ref{fig:VSSHMM:timeplanevarmod}) shows much wider spread for the sampling of the integration of the effective force \eqref{eq:effective}.
\begin{figure}
\centering
\subfloat[Variable time step \label{fig:VSSHMM:timeplanevar}]{\includegraphics[width=0.45\textwidth]{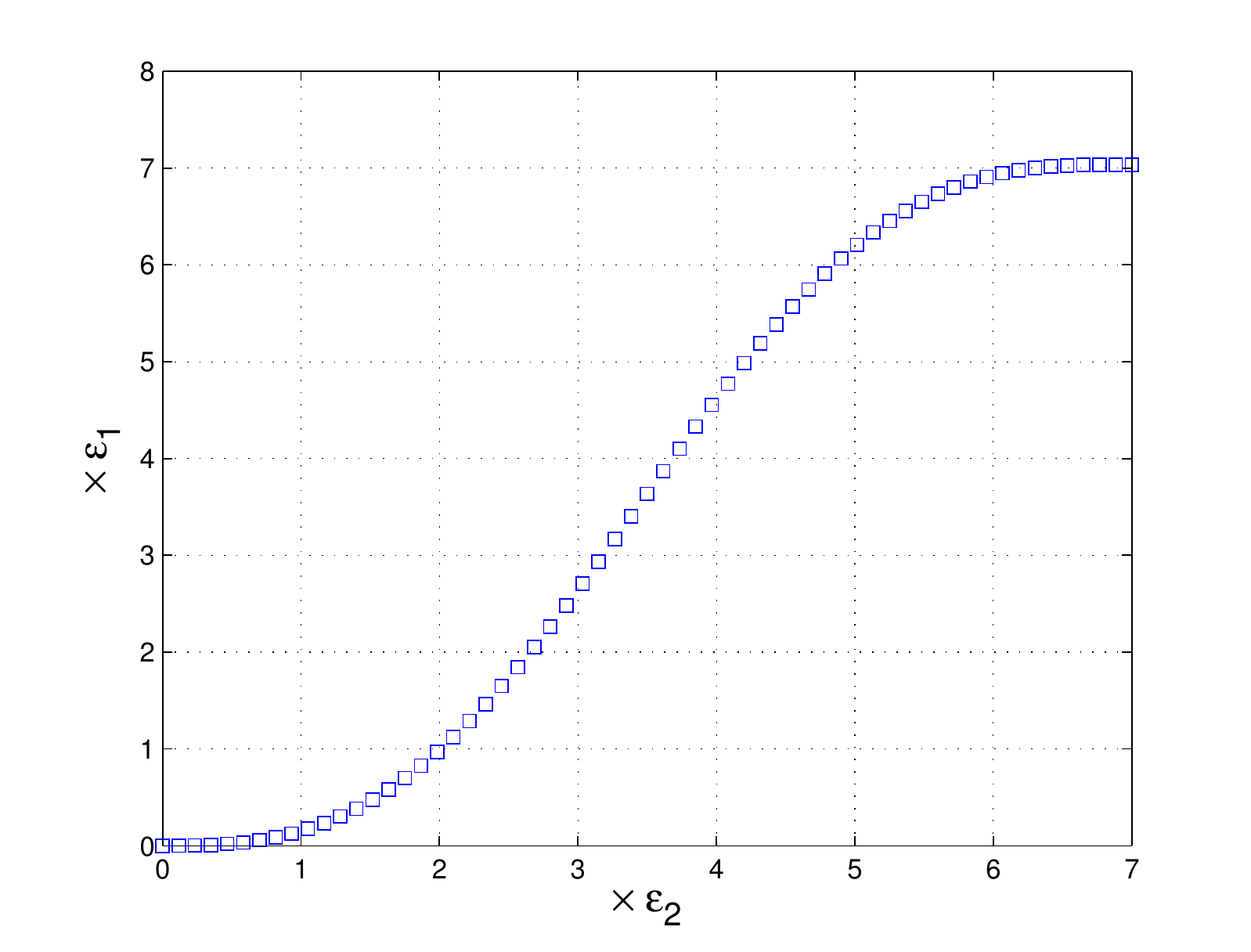}}
\subfloat[Projection onto torus\label{fig:VSSHMM:timeplanevarmod}]{\includegraphics[width=0.45\textwidth]{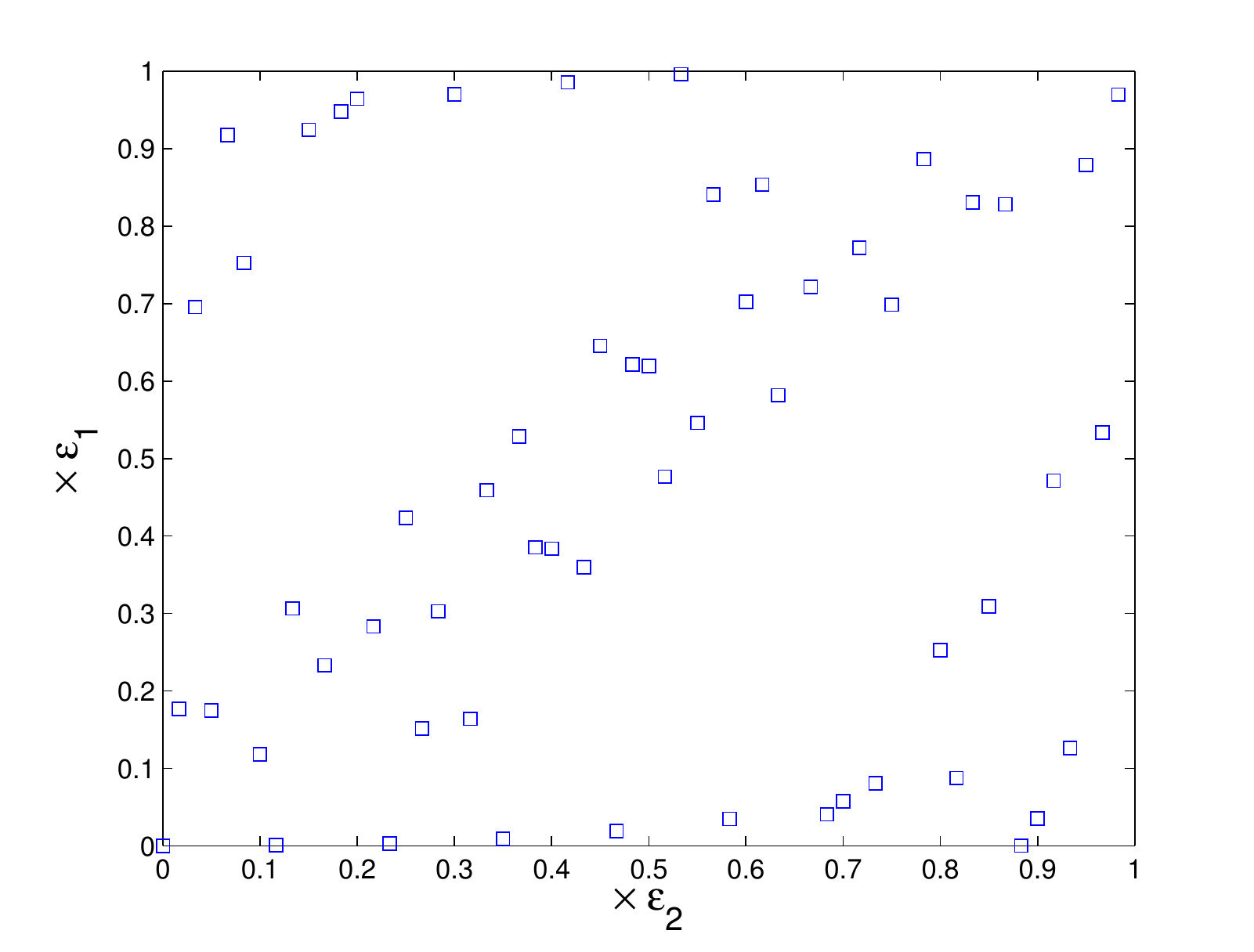}}
\caption{60 sampling points of variable time stpes in several copies of a torus. $\beta=\sqrt{1.01}$}\label{fig:VSSHMM:timeplaneprojected}
\end{figure}

The complete description of the algorithm is provided below.
\begin{algorithm}[one macro time step integration of VSHMM]
\end{algorithm}
Let $\tilde{x}^n$ be the VSSHMM solution to (\ref{eq:VSSHMM:model2}) at $t=t^n:=n\Delta T$ with savings factors $\alpha_1$ and $\alpha_2$.
\begin{enumerate}
\item Integrate the full system (\ref{eq:VSSHMM:per3}) for $\delta t$ to resolve the $\epsilon_2$ scale
$$\hat{x}(t+\delta t)=\Phi^{\epsilon_2}_{\delta t}\tilde{x}(t^n)$$
where $\Phi^{\epsilon_2}_{\delta t}$ is an integrator of (\ref{eq:VSSHMM:per3}) for $\delta t$.
\item Update time
$$t=t^n+\delta t.$$
\item Integrate the system without the $\epsilon_2$ scale term, (\ref{eq:VSSHMM:per2}), with the mesoscopic time step $h_2(t)$ 
$$\hat{x}(t+h(t))=\Phi^{\epsilon_1}_{h_2(t)}\hat{x}(t)$$
where $\Phi^{\epsilon_1}_{h_2(t)}$ in an integrator of (\ref{eq:VSSHMM:per2}) for $h_2(t)$.
\item Update time
$$t=t+h_2(t)$$
\item Integrate the system without the $\epsilon_1$ and $\epsilon_2$ scale terms, (\ref{eq:VSSHMM:per1}), with the mesoscopic time step $h_1(t)$ 
$$\hat{x}(t+h(t))=\Phi^{\epsilon_1}_{h_1(t)}\hat{x}(t)$$
where $\Phi^{0}_{h_1(t)}$ in an integrator of (\ref{eq:VSSHMM:per1}) for $h_1(t)$.
\item Update time
$$t=t+h_1(t)$$
\item If time approaches the macroscopic time points, sample the solution
$$\tilde{x}^{n+1}=\hat{x}(t)\quad\textrm{if } t=(n+1)\Delta T$$
\item Repeat from 1 for the next macro time step integration.
\end{enumerate}

For our method, the overall reduction in the computation is determined by $\alpha_1$ and $\alpha_2$. If we use a uniform time step $\delta t$ with an explicit Euler for (\ref{eq:VSSHMM:model2}), the computational savings of our method using the explicit Euler method is of order $\mathcal{O}(\alpha_1+\alpha_2)$.

\section{Numerical Examples}\label{sec:numerical}
In this section, we test VSHMM for several test problems. The first two test problems, which are dissipative and high oscillatory, are designed to be simple to analyze but have non-trivial behaviors through nonlinear interactions between different scales. In the last part, VSHMM is applied for the time integration of multiscale partial differential equations whose solutions can be represented by a small number of basis functions regardless of the finest scale of the problems. 

\subsection{Dissipative case}\label{subsec:nume:dissipative}
As the first example, we consider the following ODE system
\begin{equation}\label{eq:exp1}
\begin{split}
\frac{d\xi}{dt}&=\sin(\xi+\eta+\zeta)-\frac{(\xi+\eta+\zeta)^2}{20},\\
\frac{d\eta}{dt}&=\frac{1}{\epsilon}(3\xi^2-\eta^2+\zeta^2),\\
\frac{d\zeta}{dt}&=\frac{1}{\epsilon^2}(\xi-\eta-\zeta),\\
(\xi_0,\eta_0,\zeta_0)&=(5,-10,5).
\end{split}
\end{equation}
$\xi$ is the slow variable and $\eta$ and $\zeta$ are fast variables which are dissipative and converge to steady states for a fixed $\xi$. The smallest scale of order $\frac{1}{\epsilon^2}$, $\zeta$, quickly converges to $\xi-\eta$ for fixed $\xi$ and $\eta$. Using this stationary value of $\zeta$, we can check that $\eta$ also converges to $2\xi$ for fixed $\xi$. Thus, as $\epsilon\to 0$, the averaged equation which approximates the dynamics of $\xi$ is
\begin{equation}\label{eq:avgeq}
\frac{d\Xi}{dt}=\sin(2\Xi)-\frac{\Xi^2}{5}
\end{equation}
where the initial value (due to the ergodicity of the fast scales $\eta$ and $\zeta$) is given by $\Xi(0)=\xi_0=5$.

We check how well the averaged solution $\Xi$ approximates the slow variable. For $\epsilon=10^{-2}$, figure \ref{fig:exp1a} shows the averaged solution computed by a 4th order Runge-Kutta method with a time step $\Delta t=10^{-2}$ along with the direct numerical solution using the same 4th order Runge-Kutta method but with a much smaller time step $\Delta t=10^{-6}$. The averaged solution is on top of the DNS solution which implies that the averaged solution is a good approximation to the slow variable. Figure \ref{fig:exp1b} shows the solutions using splitting of vector fields with variable time steps (VSHMM, green) and constant time steps (red). The variable time step at the beginning is fine enough to resolve the fast transient behavior of $\eta$ and $\zeta$ to the invariant measure. VSHMM is comparable to the DNS solution (or the averaged solution) while the constant time step deviates from the DNS solution significantly at the beginning of the simulation. For our method, the averaged time steps are $10^{-2}$ and $10^{-3}$ for $h_1(t)$ and $h_2(t)$ respectively while $\delta t$ is fixed at $10^{-4}$, that is $\alpha_1=\frac{10^{-2}}{10^{-4}}=100$ and $\alpha_2=\frac{10^{-3}}{10^{-4}}=10$. Thus, for a $\mathcal{O}(1)$ time, the computational cost of our method is comparable to the simulation of the averaged equation.
\begin{figure}[h]
	\centering
	\subfloat[DNS of $\xi$ (blue) and averaged (green) solutions\label{fig:exp1a}]{\includegraphics[width=.45\textwidth]{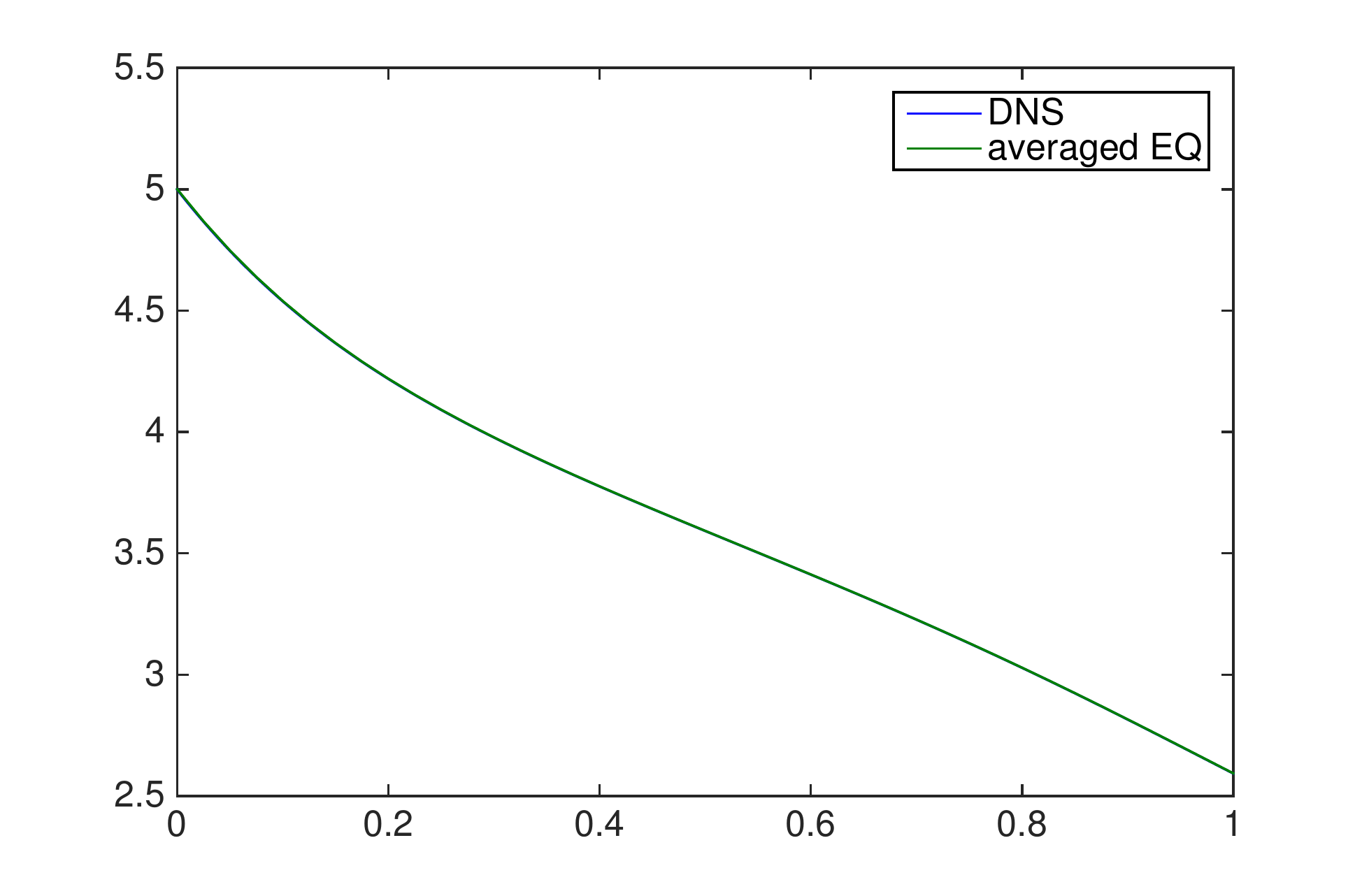}}
	\subfloat[DNS of $\xi$ (blue) and splitting method with variable (green) and constant (red) time steps\label{fig:exp1b}]{\includegraphics[width=.45\textwidth]{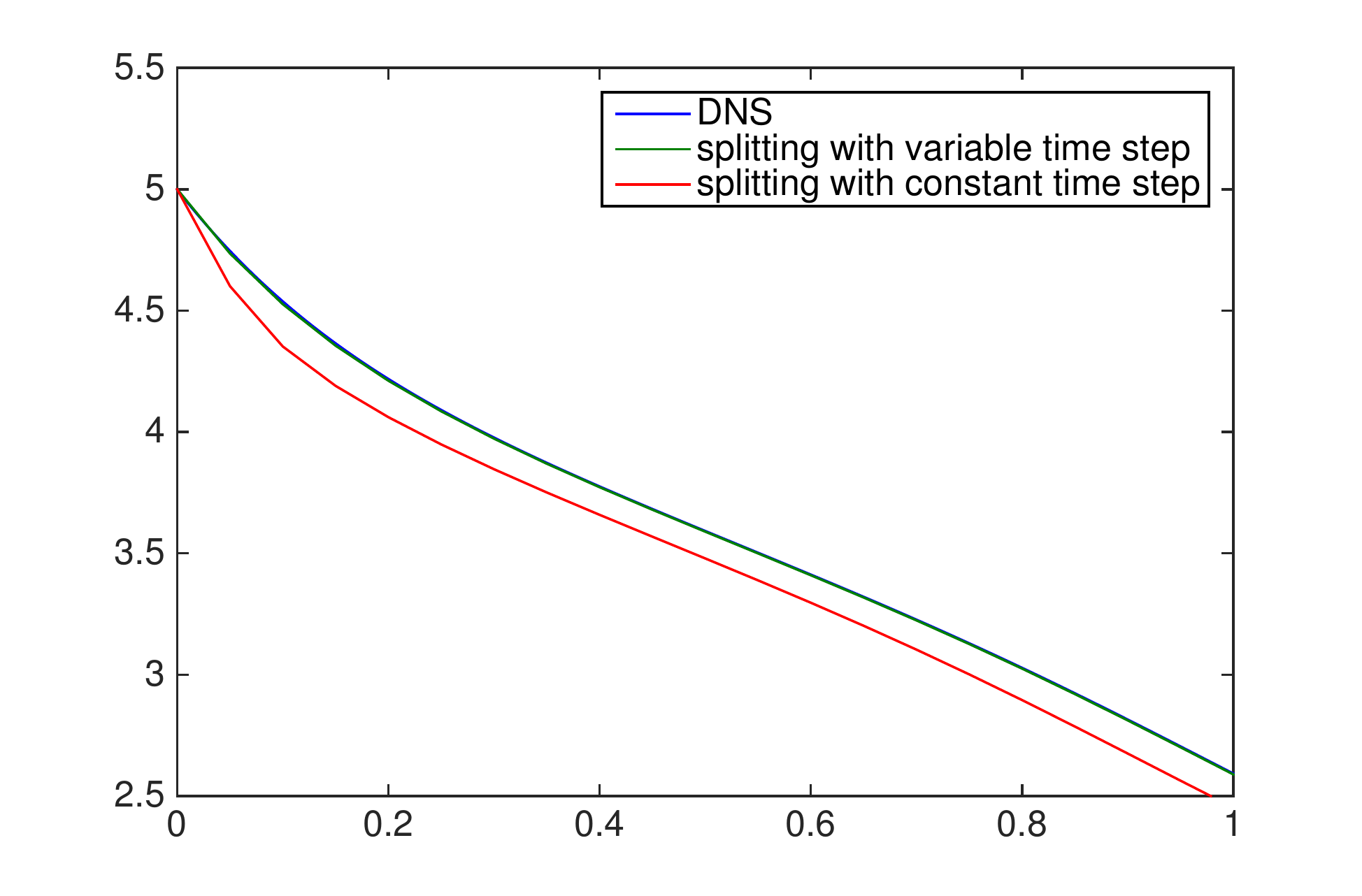}}
	\caption{Numerical solutions of \eqref{eq:exp1} using various methods, direct numerical simulation (DNS), averaged equation, and splitting technique with variable or constant time steps.}
	\label{fig:exp1_avg}
\end{figure}

\subsection{Oscillatory case}\label{subsec:nume:oscillatory}
For the second test problem, we consider the following highly oscillatory problem which is discussed in \cite{3scHMM} as a test problem with several temporal scales
\begin{equation}
\left\{\begin{split}
	\frac{dx_1}{dt}=&-\frac{1}{\epsilon^2}y_1+\frac{1}{\epsilon}y_2^2-3x_1x_2^2,\\
	\frac{dx_2}{dt}=&-\left(\frac{1}{\epsilon^2}+\frac{1}{\epsilon}\right)y_2-x_2,\\
	\frac{dy_1}{dt}=&\frac{1}{\epsilon^2}x_1+\frac{1}{2}y_1,\\
	\frac{dy_2}{dt}=&\left(\frac{1}{\epsilon^2}+\frac{1}{\epsilon}\right)x_2-y_2+2x_1^2y_2.
\end{split}
\right.
\end{equation}
The equation describes a system of two coupled harmonic oscillators with resonance and all four variables have the $\frac{1}{\epsilon^2}$ time scale and an iterated application of the two-scale time integrators are successfully applied for this problem. To check the slow variables, the following new variables were introduced \cite{3scHMM}
\begin{equation}
\begin{split}
I_1=&x_1^2+y_1^2,\\
I_2=&x_2^2+y_2^2,\\
\theta=&x_1x_2+y_1y_2,\\
\cos\phi_1=&\frac{x_1}{\sqrt{I_1}}.
\end{split}
\end{equation}
The first two variables, $I_i,i=1,2,$ correspond to the squared radial distance of each oscillators in the polar coordinate; $\theta$ is a polynomial variable which describes the resonance between two oscillators; $\phi_1$ is the angle of the first oscillator. A direct calculation shows that the time derivatives of the new variables are given by
\begin{equation}
\begin{split}
\frac{dI_1}{dt}=&\frac{2}{\epsilon}x_1y_2^2-6x_1^2x_2^2+y_1^2,\\
\frac{dI_2}{dt}=&-2I_2+4x_1^2y_2^2,\\
\frac{d\theta}{dt}=&\frac{1}{\epsilon}(x_2y_2^2+y_1x_2-x_1y_2)+(-y_1y_2/2-x_1x_2-3x_1x_2^3+2x_1^2y_1y_2),\\
\frac{d\phi_1}{dt}=&\frac{1}{\epsilon^2}.
\end{split}
\end{equation}
Interesting fact is that the average value of $x_1y_2^2$ for a time interval larger than $\mathcal{O}(\epsilon)$ is of order $\epsilon^2$ \cite{3scHMM}. Therefore, the averaged time derivative of $I_1$ is bounded independent of $\epsilon$.

Figure \ref{fig:VSSHMM:osc} shows the results from DNS and splitting method with constant time steps (figure \ref{fig:VSSHMM:osc} (a)) and with VSHMM (figure \ref{fig:VSSHMM:osc} (b)) for $\epsilon=10^{-3}$. For DNS we use a 4th order Runge-Kutta with a time step $\delta t=10^{-7}$.
For variable time stepping, the cosine kernel $K(t)=\left(1+\cos({2\pi (t-1/2)})\right)$ is used with the same 4th order Runge-Kutta method for the integration of each scale vector field. The savings factors are $\alpha_1=68,000$ and $\alpha_2=330$. Note that the VSHMM solutions are plotted at a sampling interval $\Delta T=8.00\times 10^{-1}$ larger than the average value of the largest time step $h_1$, $6.8\times 10^{-3}$.
\begin{figure}
\centering
\subfloat[constant time steps\label{fig:exp2const}]{\includegraphics[width=0.45\textwidth]{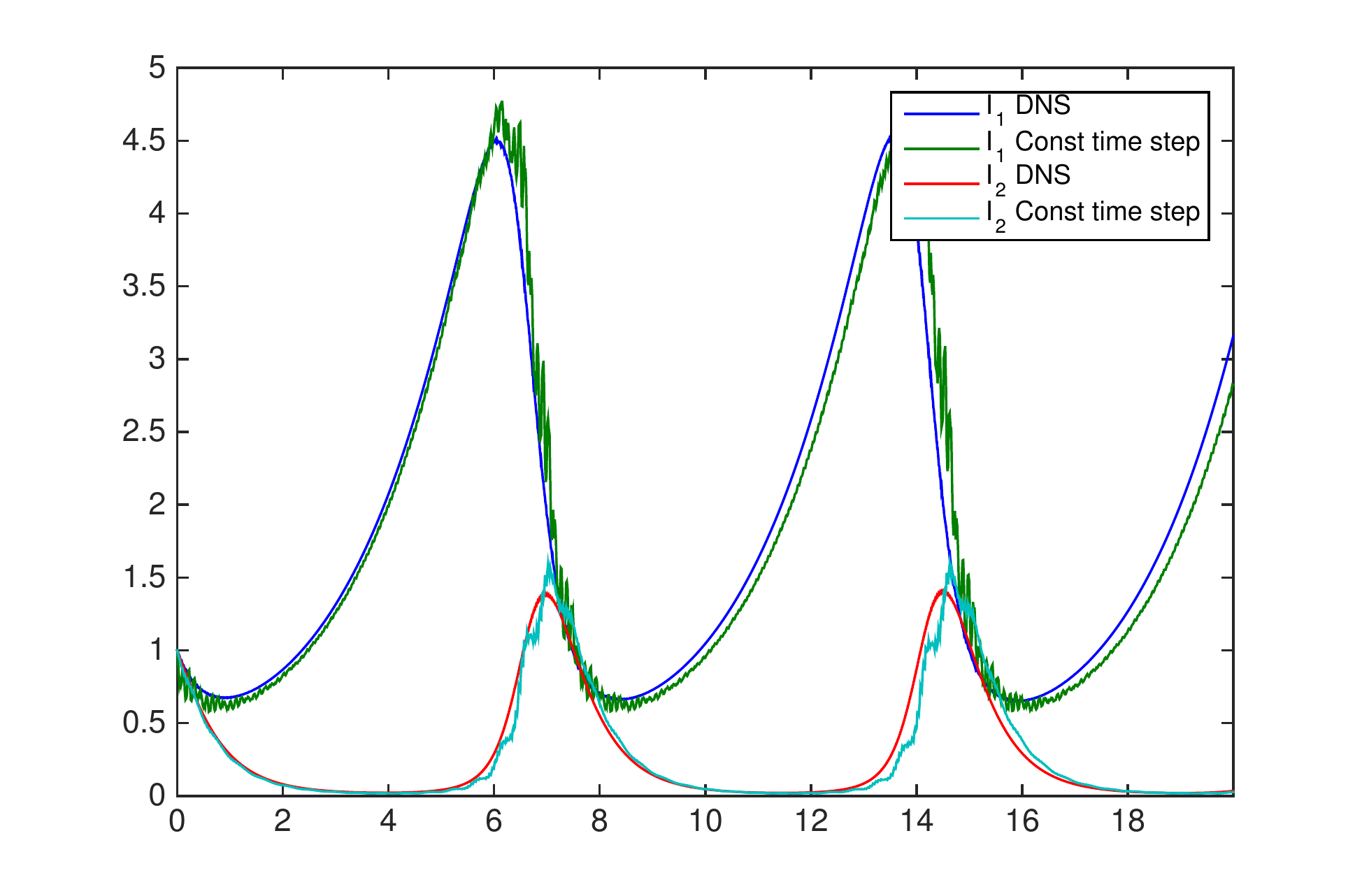}}
\subfloat[variable time steps\label{fig:exp2variable}]{\includegraphics[width=0.45\textwidth]{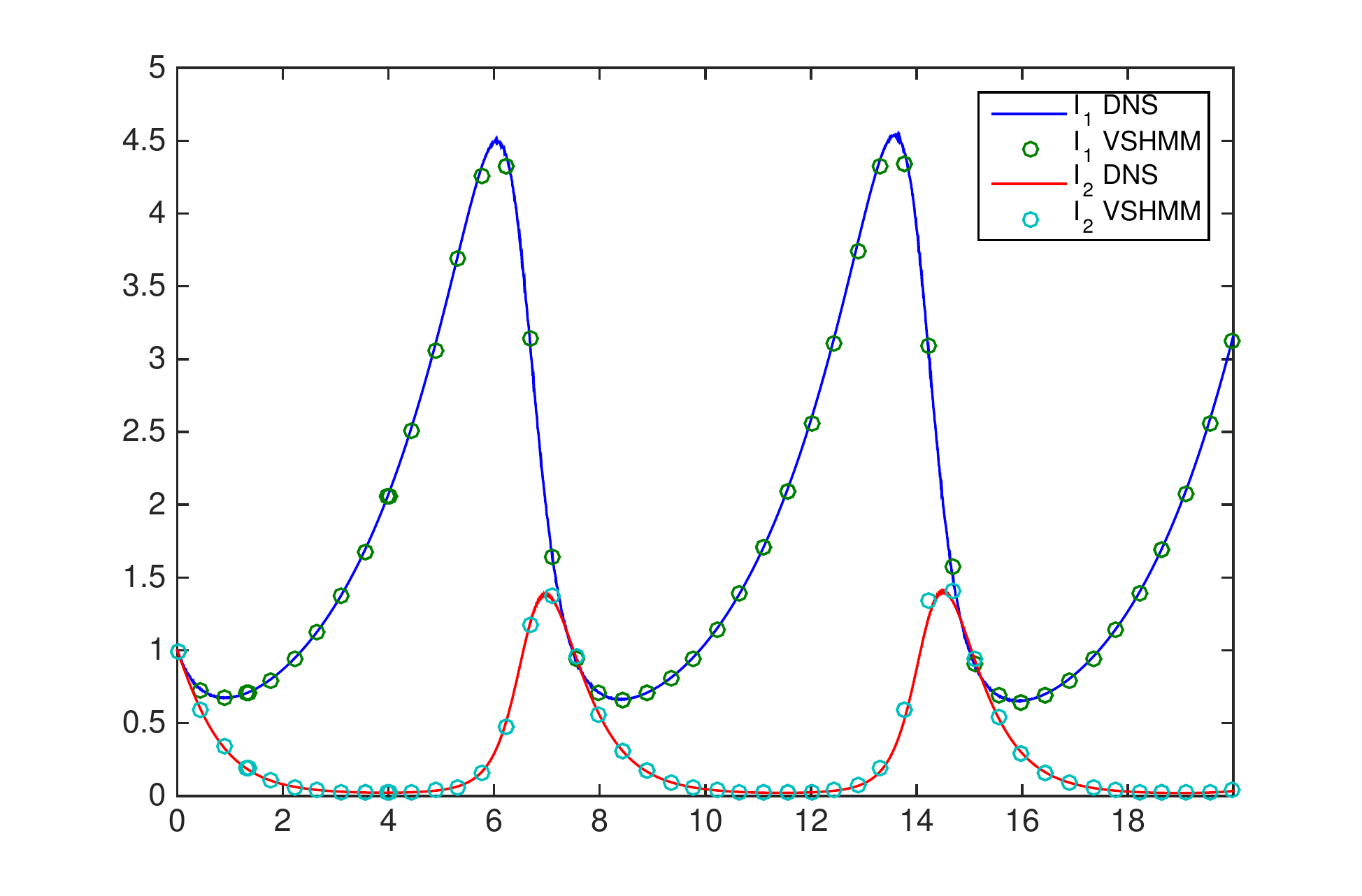}}
\caption{Numerical solutions of the slow variables $I_1$ and $I_2$ using (a) constant and (b) variable step size techniques. Simulation parameters are $\epsilon=10^{-3}, \alpha_1=6.82\times 10^{4}, \alpha_2=3.30\times 10^{2}, \delta t=4.40\times 10^{-7} \textrm{ and } \Delta T=8.00\times 10^{-1}$}\label{fig:VSSHMM:osc}
\end{figure}

The solution using constant time steps has errors in the phase and the amplitude from the DNS solution and the phase errors become larger for longer times. On the other hand, the solution using variable time steps is on top of the DNS solution without a phase error even for a long time.

\subsection{Application to sparse spectral methods}
In this last part, we apply VSHMM for the integration of multiscale PDEs with sparsity in the solution space, that is, the sparse spectral methods \cite{sparseFFT,sparsePDE}. For the solution $u$ to the following general time dependent problem
\begin{equation}\label{eq:general}
u_t=F(u),
\end{equation}
standard spectral methods approximate the solution using a given basis $\{\psi_k\}$ and time dependent coefficients $\{\alpha_k\}$
\begin{equation}
u=\sum_k\alpha_k(t)\psi_k(x).
\end{equation}
For a smooth solution $u$, the spectral approximation has a spectral accuracy high than other finite difference or finite element approximations \cite{SpectralMethod}. With periodic boundary conditions, the natural basis is the Fourier basis $\psi_k(x)=\exp(ikx)$ and the time integration of the equation \eqref{eq:general} is achieved by the integration of each coefficient $\alpha_k(t)=\hat{u}_k(t)$, i.e., the Fourier transform, 
\begin{equation}
\frac{d u_k}{dt}=\hat{F}_k
\end{equation}
where $\hat{F}_k$ is the Fourier transform of $F(u)=F(\sum_k \hat{u}_k\exp(ikx))$. The sparse spectral method \cite{sparseFFT,sparsePDE} uses only a small number of basis to reconstruct the solution $u$. This implies that only a small number of Fourier modes, $u_k$, need to be updated in time. The key observation in applying VSHMM for the sparse spectral method is the clustering of the Fourier modes. The clustering of the Fourier modes gives a natural way to decompose the force $F(u)$ into different scale components. 

%
%
%
%
\subsubsection*{Multiscale diffusion equation}
The first example is a diffusion equation with no external force
\begin{equation}\label{eq:diffusion}
\partial_t u=\partial_x\left(d(x)\partial_x u\right)
\end{equation}
where $d$ is a space dependent diffusion coefficient. When $d(x)$ contains multiple scales, this problem becomes computationally expensive as the finest scale of $d(x)$ requires a large number of grids for accuracy and a tiny time step for stability. For the following multiscale diffusion coefficient
\begin{equation}
d(x)=\frac{1}{4}\left(\exp(\sin(64x))+\exp(\sin(256x))\right),
\end{equation}
a numerical test with the Fourier spectral method and a third order Runge-Kutta time integration shows that at least 2048 Fourier modes are required and the time step must be shorter than $1\times 10^{-6}$ to have a consistent result with the converged solution.

Using a smooth initial profile centered at $x=\pi$
\begin{equation}\label{eq:initial}
u(0,x)=\exp(-(x-\pi)^2),
\end{equation}
the numerical solution with 2048 Fourier modes and a time step $1\times 10^{-6}$ and the time series of the log-scale magnitude of the Fourier coefficients, $\log|\hat{u}_k|$, are shown in figure \ref{fig:dif:DNS}. As there is no external forcing and with the periodic boundary condition, the solution converges to a constant value $0.2821=\frac{1}{2\pi}\int_0^{2\pi}u(0,x)dx$, the mean of the initial value. For the diffusion problem, high wavenumbers decay fast but some high wavenumbers which interact with the multiscale diffusion coefficient $d$ decay slow. From the log-scale magnitude of the Fourier coefficients of the solution (figure \ref{fig:dif:DNS:log}), we can check that only a small number of Fourier modes have significantly large magnitude. This implies that the solution can be reconstructed efficiently using a small subset of the basis. 
\begin{figure}[h!]
	\centering
	\subfloat[Time series of $u$ in physical space]{\includegraphics[width=.49\textwidth]{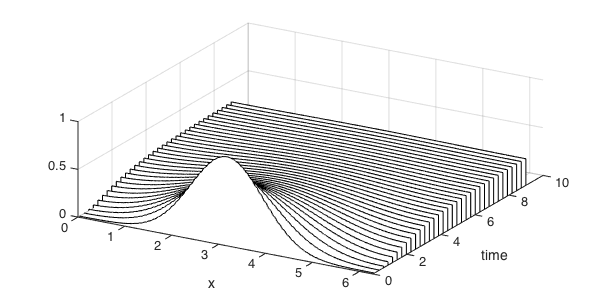}}
	\subfloat[Time series of $\log|\hat{u}_k|$\label{fig:dif:DNS:log}]{\includegraphics[width=.49\textwidth]{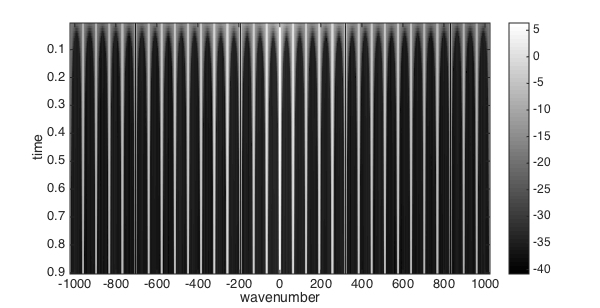}}	
	\caption{DNS solution of the multiscale diffusion equation \eqref{eq:diffusion} with 2048 Fourier modes and a time step $\Delta t=1\times 10^{-6}$ }
	\label{fig:dif:DNS}
\end{figure}

The sparsity of the solution in some basis is the fundamental idea of the sparse spectral methods \cite{sparseFFT,sparsePDE}. The method employed in \cite{sparsePDE} to compress the solution is a soft thresholding with a shrinkage value $\lambda$. That is, for a function $v$, the sparse representation of $v$ is obtained through the following soft thresholding
\begin{equation}
\hat{v}^s_k=\max(|\hat{v}_k|-\lambda,0)\frac{\hat{v}_k}{|\hat{v}_k|}.
\end{equation}

\begin{figure}[h!]
	\centering
	\includegraphics[width=.99\textwidth]{./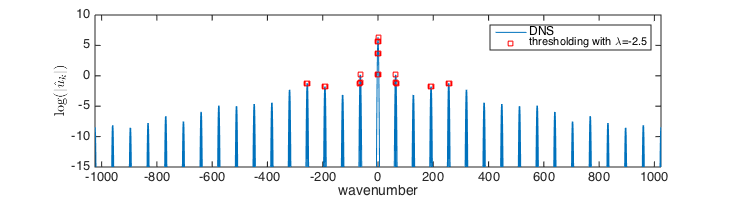}
	\caption{Log-scale magnitude of the Fourier coefficients, $\log(|\hat{u}_k|)$ by DNS (blue) and after soft thresholding with $\lambda=-2.5$ (red)}
	\label{fig:dif:coef}
\end{figure}
For the solution of \eqref{eq:diffusion}, a soft thresholding with $\lambda=10^{-2.5}$ retain only a very small number of coefficients (marked with red squares in figure \ref{fig:dif:coef}). By collecting adjacent wavenumbers, we obtain the following four clusters
\begin{eqnarray}
\nonumber K_0&=&\{k|k=0, \pm 1, \pm 2, \pm 3\},\\
\label{eq:sparse:dif:cluster} K_1&=&\{k|k=\pm 62, \pm 63, \pm 65, \pm 66\},\\
\nonumber K_2&=&\{k|k=\pm 192, \pm 193\},\\
\nonumber K_3&=&\{k|k=\pm 255, \pm 257\}.
\end{eqnarray}
The sparse spectral method achieves efficiency using a sparse representation of the solution yet it still suffers from a small time step restricted by the wavenumbers in $K_3$ for the stability in explicit time integrations. The key observation in applying VSHMM for the sparse spectral method to speed up the time integration is the clustering of Fourier modes \eqref{eq:sparse:dif:cluster}. Once we evaluate $F(u)=\partial_x(d(x)\partial_x u)$ at each time step, the clustering determines the decomposition of $F(u)$. With the four clusters \eqref{eq:sparse:dif:cluster}, we decompose $F(u)$ into four parts $F_j(u),j=0,1,...,3$ where $F_j(u)$ is defined as
$$(\hat{F_j})_k=\left\{\begin{array}{cc}\hat{F}_k&\mbox{if }k\in K_j,\\0&\mbox{otherwise}.\end{array}\right.$$
where it is easily verified that $F(u)=\sum_j F_j(u)$. Now we apply VSHMM by treating $F_j$ as different scale components. 

The VSHMM solution using a time step $\delta t = 1\times 10^{-5}$ for the finest scale $F_3$ with $\alpha_1=150,\alpha_2=18, \alpha_3=1.5$ is shown in figure \ref{fig:dif:DNSvsVSHMM} along with the DNS solution. 
\begin{figure}[h!]
	\centering
	\includegraphics[width=1\textwidth]{./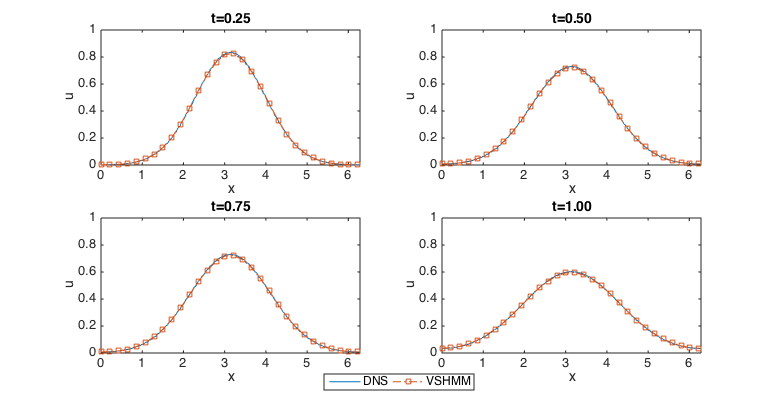}
	\caption{DNS and VSHMM solutions of the multiscale diffusion equation \eqref{eq:diffusion} at $t=0.25, 0.50, 0.75$ and $1.00$}
	\label{fig:dif:DNSvsVSHMM}
\end{figure}
The VSHMM solution is on top of the DNS solution while VSHMM is more than 60 times faster than the standard explicit integration of the sparse spectral method and 1000 times faster than the direct numerical simulations.

%
%
%
%
\subsubsection*{Multiscale advection equation}
Another example for the fast time integration of sparse spectral method is an advection equation which shows less discrete coefficients. For $u:(0,\infty)\times [0,2\pi)\to \mathbb{R}$, the following advection equation with no external forcing 
\begin{equation}\label{eq:advection}
\partial_t u+a(x)u_x=0, \qquad u(0,x)=u_0
\end{equation}
describes the transport of the initial value $u_0$ under a multiscale velocity field $a(x)$\begin{equation}
a=\frac{1}{4}\exp\left(\frac{0.6+0.2\cos(3x)}{1+0.35\sin(64x)+0.35\sin(256x)}\right).
\end{equation}

A direct numerical simulation result up to $t=36$ using 1024 Fourier modes and a third order Runge-Kutta time integration with a time step $\Delta t=1.6\times 10^{-4}$ is shown in figure \ref{fig:adv:DNS}. As time flows, the solution moves to the left while its shapes are slightly changing due to inhomogeneous velocity field. From the time series of the magnitude of the Fourier coefficients (figure \ref{fig:adv:DNS} (b)), we can check that this problem has more continuous range of Fourier coefficients than the diffusion problem. 
\begin{figure}[h!]
	\centering
	\subfloat[Time series of $u$ in physical space]{\includegraphics[width=.49\textwidth]{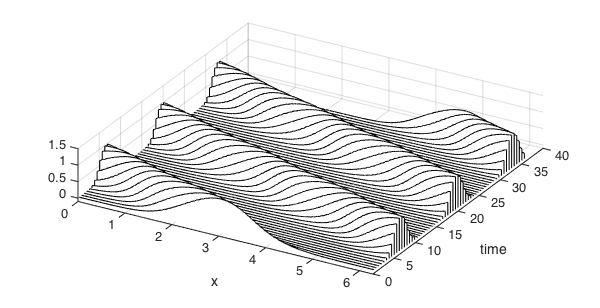}}
	\subfloat[Time series of $\log|\hat{u}_k|$]{\includegraphics[width=.49\textwidth]{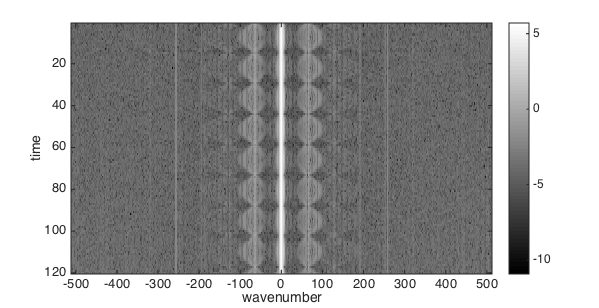}}	
	\caption{DNS solution of the multiscale advection equation \eqref{eq:advection} with 1024 Fourier modes and a time step $\Delta t=1.6\times 10^{-4}$ }
	\label{fig:adv:DNS}
\end{figure}
For the solution $u$ of \eqref{eq:advection}, a soft thresholding with $\lambda=10^{-2.5}$ tells that the following four Fourier mode clusters are crucial to represent the solution $u$
\begin{eqnarray}
\nonumber K_0&=&\{k|k=0\pm 24\},\\
\label{eq:sparse:adv:cluster} K_1&=&\{k|k=\pm 64 \pm 24\},\\
\nonumber K_2&=&\{k|k=\pm 138 \pm 24\},\\
\nonumber K_3&=&\{k|k=\pm 256 \pm 10\}.
\end{eqnarray}
where some extra wavenumbers are added to each cluster as buffers.

Figure \ref{fig:adv:DNSvsVSHMM} shows the solution by DNS (blue) and VSHMM (red) at $t=9,18,27$ and 36. The VSHMM solution captures not only the correct propagation speed but also the shape of the solution. For this result, the VSHMM solution uses a time step $\delta = 1\times 10^{-3}$ for the finest scale with $\alpha_1=26,\alpha_2=9,\alpha_3=2$. Using these time step values, VSHMM uses 40 times less iterations than DNS.
\begin{figure}[h!]
	\centering
	\includegraphics[width=1\textwidth]{./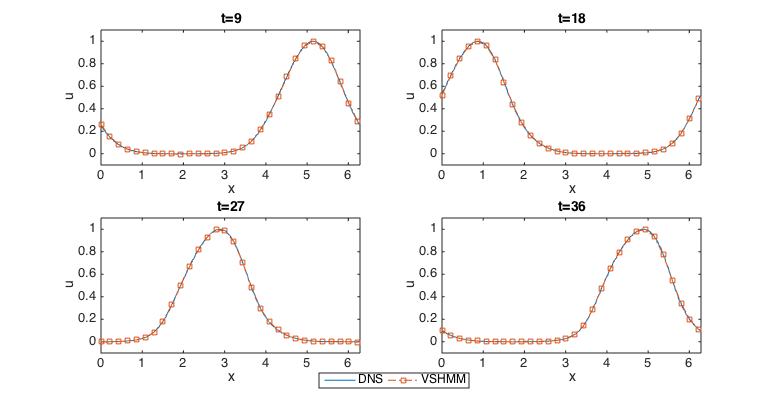}
	\caption{DNS and VSHMM solutions of the multiscale advection equation \eqref{eq:advection} at $t=9,18,27$ and 36.}
	\label{fig:adv:DNSvsVSHMM}
\end{figure}

\section{Conclusion}\label{sec:conclusion}
The variable step size Heterogeneous Multiscale Method (VSHMM) is extended to several temporal scale problems without using iterated applications of two-scale integrators. By splitting the vector field of different scales, VSHMM uses time steps corresponding to different scales; it further uses variable time steps (i.e., time dependent time steps) for 1) high accuracy and 2) fast (Monte-Carlo like) integration  of invariant measure. The computational complexity of our method increases linearly proportional the number of different scale which is much faster than the iterated application of two-scale methods which have exponentially increasing computational complexity. The method is tested for dissipative and oscillatory problems and applied to sparse spectral methods for multiscale PDEs. 

For dissipative systems, under appropriate assumptions, we show that the several scale problems can be treated as two-scale problems (Theorem \ref{thm:dissipative}) which justifies the use of different time steps for different scale components of the vector field. For high oscillatory problems, whose invariant measure is not of a Dirac type, we had to relax the interactions between the fast variables; no direct interactions between the fast variables except indirect interactions through the slow dynamics. Under this relaxation, it was straightforward to treat the different fast variables as one scale fast variables which is equivalent to Theorem \ref{thm:dissipative} of the dissipative case. But there is still another problem related to the integration of the effective force and thus the variable step sizes are employed for intermediate mesoscopic scales. 

Our proposed variable time stepping can be extended for time dependent partial differential equations including nonlinear problems with continuous temporal scales. This extension results in an explicit time integration method using a large time step violating the CFL condition and it will be reported in an upcoming paper \cite{FastExplicit}. 

\section*{Acknowledgments}
The authors thank Seongjun Kim for comments and a careful reading of the manuscript. The research was partially supported by NSF grant DMS-1217203.

\end{document}